\theoremstyle{plain}
\newtheorem{theorem}{Theorem}
\numberwithin{theorem}{section}
\newtheorem{lemma}[theorem]{Lemma}
\newtheorem{proposition}[theorem]{Proposition}
\newtheorem{corollary}[theorem]{Corollary}
\theoremstyle{definition}
\newtheorem{remark}[theorem]{Remark}
\numberwithin{equation}{section}
\begin{document}


\baselineskip=12pt


\title{A characterisation of elementary abelian $3$-groups}

\author{C. S. Anabanti\footnote{The author is supported by a Birkbeck PhD Scholarship.}\\ \centering{\footnotesize The author dedicates this paper to Professor Sarah Hart with admiration and respect.}}

\date{}

\maketitle

\renewcommand{\thefootnote}{}

\footnote{2010 \emph{Mathematics Subject Classification}: Primary 11B75; Secondary 20D60, 20K01, 05E15.}

\footnote{\emph{Key words and phrases}: Sum-free sets, maximal sum-free sets, elementary abelian groups.}

\renewcommand{\thefootnote}{\arabic{footnote}}
\setcounter{footnote}{0}

\begin{abstract}
\noindent T{\u a}rn{\u a}uceanu [Archiv der Mathematik, \textbf{102 (1)},  (2014), 11--14] gave a characterisation of elementary abelian $2$-groups in terms of their maximal sum-free sets. His theorem states that a finite group $G$ is an elementary abelian $2$-group if and only if the set of maximal sum-free sets coincides with the set of complements of the maximal subgroups. 
A corollary is that the number of maximal sum-free sets in an elementary abelian $2$-group of finite rank $n$ is $2^n-1$.
Regretfully, we show here that the theorem is wrong. 
We then prove a correct version of the theorem from which the desired corollary can be deduced.  
Moreover, we give a characterisation of elementary abelian $3$-groups in terms of their maximal sum-free sets. 
A corollary to our result is that the number of maximal sum-free sets in an elementary abelian $3$-group of finite rank $n$ is $3^n-1$.
Finally, for prime $p>3$ and $n\in \mathbb{N}$, we show that there is no direct analogue of this result for elementary abelian $p$-groups of finite rank $n$.
\end{abstract} 

\section{Preliminaries} 
The well-known result of Schur which says that whenever we partition the set of positive integers into a finite number of parts, at least one of the parts contains three integers $x,y$ and $z$ such that $x+y=z$ introduced the study of sum-free sets. Schur \cite{S1917} gave the result while showing that the Fermat's last theorem does not hold in $\mathbb{F}_p$ for sufficiently large $p$. The result was later extended to groups as follows: A non-empty subset $S$ of a group $G$ is sum-free if for all $s_1,s_2 \in S$, $s_1s_2\notin S$. (Note that the case $s_1=s_2$ is included in this restriction.)
An example of a sum-free set in a finite group $G$ is any non-trivial coset of a subgroup of $G$. Sum-free sets have applications in Ramsey theory and are also closely related to the widely studied concept of caps in finite geometry. Some questions that appear interesting in the study of sum-free sets are:
(i) How large can a sum-free set in a finite group be?
(ii) Which finite groups contain maximal by inclusion sum-free sets of small sizes?
(iii) How many maximal by cardinality sum-free sets are there in a given finite group?
Each of these questions has been attempted by several researchers; though none is fully answered. For question (i), Diananda and Yap \cite{DY1969}, in 1969, following an earlier work of Yap \cite{Y1969}, determined the sizes of maximal by cardinality sum-free sets in finite abelian groups $G$, where $|G|$ is divisible by a prime $p \equiv 2$ mod $3$, and where $|G|$ has no prime factor $p\equiv 2$ mod $3$ but $3$ is a factor of $|G|$. They gave a good bound in the case where every prime factor of $|G|$ is congruent to $1~mod~3$. Green and Rusza \cite{GR2005} in 2005 completely answered question (i) in the finite abelian case. The question is still open for the non-abelian case, even though there has been some progress by Kedlaya \cite{K1997,K1998}, Gowers \cite{G2008}, among others. 
For question (ii), Street and Whitehead \cite{SW1974} began research in that area in 1974. They called a maximal by inclusion sum-free set, a locally maximal sum-free set (LMSFS for short), and calculated all LMSFS in groups of small orders, up to $16$ in \cite{SW1974,SW1974A} as well as a few higher sizes.  
In 2009, Giudici and Hart \cite{GH2009} started the classification of finite groups containing LMSFS of small sizes. Among other results, they classified all finite groups containing LMSFS of sizes $1$ and $2$, as well as some of size $3$. The size $3$ problem was resolved in \cite{AH2016}. Question (ii) is still open for sizes $k\geq 4$.\\
\\
To be consistent with our notations, we will use the term `maximal' to mean `maximal by cardinality' and `locally maximal' to mean `maximal by inclusion'. 
T{\u a}rn{\u a}uceanu \cite{T2014} in 2014 gave a characterisation of elementary abelian $2$-groups in terms of their maximal sum-free sets. His theorem (see Theorem 1.1 of \cite{T2014}) states that ``a finite group $G$ is an elementary abelian $2$-group if and only if the set of maximal sum-free sets coincides with the set of complements of the maximal subgroups". The author of \cite{T2014} didn't define the term maximal sum-free sets.
Unfortunately, the theorem is false whichever definition is used.
If we take `maximal' in the theorem to mean `maximal by cardinality', then a counter example is the cyclic group $C_4$ of order $4$, given by $C_4=\langle x \mid x^4=1\rangle$. Here, there is a unique maximal (by cardinality) sum-free set namely $\{x,x^3\}$, and it is the complement of the unique maximal subgroup. But $C_4$ is not elementary abelian. On the other hand, if we take `maximal' to mean `maximal by inclusion', then the theorem will still be wrong since $S=\{x_1,x_2,x_3,x_4,x_1x_2x_3x_4\}$ is a maximal by inclusion sum-free set in $C_2^4=\langle x_1,x_2,x_3,x_4 \mid x_i^2=1,x_ix_j=x_jx_i \text{ for } 1 \leq i,j \leq 4 \rangle$, but does not coincide with any complement of a maximal subgroup of $C_2^4$. 
\\\\
For a prime $p$ and $n\in \mathbb{N}$, we write $\mathbb{Z}_p^n$ for the elementary abelian $p$-group of finite rank $n$.
We recall here that the number of maximal subgroups of $\mathbb{Z}_p^n$ is $\sum\limits_{k=0}^{n-1}p^k$. In this paper, we give a correction to Theorem 1.1 of \cite{T2014} which will then make its desired corollary hold. For the rest of this section, we state the main result of this paper and its immediate corollary. Recall that $\Phi(G)$ is the Frattini subgroup of $G$.
\begin{theorem}\label{thm1} 
A finite group $G$ is an elementary abelian $3$-group if and only if the set of non-trivial cosets of each maximal subgroup of $G$ coincides with two maximal sum-free sets in $G$, every maximal sum-free set is a non-trivial coset of a maximal subgroup, and $\Phi(G)=1$.
\end{theorem}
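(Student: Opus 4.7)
The plan is to treat the two directions of the equivalence separately, using Diananda and Yap's \cite{DY1969} result on the sizes and structure of extremal sum-free sets in finite abelian groups.

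Forward direction. Assume $G \cong \mathbb{Z}_3^n$. The Frattini subgroup of an elementary abelian group is trivial, which gives the third condition. Every maximal subgroup $M$ of $G$ has index $3$, so for any $a \in G \setminus M$ its non-trivial cosets are precisely $M+a$ and $M+2a = M-a$. The direct computation $(m_1+a)+(m_2+a) = (m_1+m_2)+2a \in M-a$ (and the analogous one for $M-a$) shows that each of these cosets is sum-free. Their common cardinality is $|G|/3 = 3^{n-1}$, which by \cite{DY1969} equals the maximum size of a sum-free set in $\mathbb{Z}_3^n$, so they are maximal sum-free sets. For the converse inclusion I would invoke the structural refinement of that same result (obtained from Kneser's theorem applied to the relation $(S+S)\cap S = \emptyset$): every sum-free set of cardinality $|G|/3$ in $\mathbb{Z}_3^n$ is a non-trivial coset of an index-$3$ subgroup.

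Reverse direction. Suppose $G$ is a finite group satisfying all three conditions. The first condition forces each maximal subgroup $M$ to have exactly two non-trivial cosets, so $|G:M| = 3$. Let $C_M = \bigcap_{g \in G} gMg^{-1}$ be the core of $M$; the action of $G$ on the three cosets of $M$ induces a faithful transitive action of $G/C_M$ on a $3$-element set, hence $G/C_M$ embeds in $S_3$ as a transitive subgroup and so $G/C_M \cong C_3$ or $S_3$. If $G/C_M \cong S_3$ for some $M$, then the preimage in $G$ of $A_3 \leq S_3$ is a subgroup of index $2$, which is automatically maximal, contradicting the conclusion that every maximal subgroup of $G$ has index $3$. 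Hence every $G/C_M$ is cyclic of order $3$. The natural map $G \to \prod_M G/C_M$, indexed by the finitely many maximal subgroups of $G$, has kernel $\bigcap_M C_M \subseteq \bigcap_M M = \Phi(G) = 1$ and is therefore injective, realising $G$ as a subgroup of a finite direct product of copies of $C_3$. Consequently $G$ is an elementary abelian $3$-group.

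The main obstacle is supplying the classification of maximum-cardinality sum-free sets in $\mathbb{Z}_3^n$ used in the forward direction; I would extract this either directly from \cite{DY1969} or via a short Kneser's-theorem computation. The reverse direction is then essentially routine group theory once the core-quotient framework is in place, and notably does not use the second of the three listed conditions, which plays a role only in the forward direction.
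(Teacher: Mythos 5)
Your reverse direction is correct and is a genuinely different route from the paper's: the paper rules out index-$2$ subgroups, runs a Sylow argument to show $G$ is a $3$-group, and then quotes the fact that $P/\Phi(P)$ is elementary abelian; you instead embed $G$ into $\prod_M G/C_M$ via the core of each maximal subgroup, identify each quotient with $C_3$ after excluding $S_3$, and conclude directly. That is clean, and your observation that the second listed condition is not needed there matches the paper (whose own argument at that point can also be run off the first condition alone).

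The forward direction, however, has a genuine gap exactly where you flag it, and the gap is not merely bibliographic. The assertion that every sum-free set of size $|G|/3$ in $\mathbb{Z}_3^n$ is a non-trivial coset of an index-$3$ subgroup is the entire content of the hard half of the theorem, and neither of your proposed sources delivers it as stated. Diananda and Yap \cite{DY1969} determine the maximum \emph{size} $|G|/3$ for abelian $G$ with $3\mid |G|$ and no prime factor $\equiv 2 \pmod 3$, but ``extremal implies coset'' cannot be a structure theorem at that level of generality: in $\mathbb{Z}_9$ the set $\{3,4,5\}$ is sum-free of the maximum size $3$ and is not a coset, so any correct classification must use that $G$ has exponent $3$, not just that it is of this Diananda--Yap type. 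Your Kneser sketch also does not close: applying Kneser to $S+S$ with $H=\operatorname{Stab}(S+S)$ gives $|S+S|\ge 2|S+H|-|H|$, while the disjointness $(S+S)\cap S=\varnothing$ only gives $|S+S|\le 2|S|$; these are compatible even when $H$ is trivial, so no conclusion about $H$ follows. The missing input is precisely the exponent-$3$ identity $-s=s+s$, which puts $-S$ inside $S+S$ and hence makes $S$, $-S$ and $S-S$ pairwise disjoint. That forces $|S-S|\le |G|-2|S|=|S|$, while $S-S\supseteq S-x$ already has size $|S|$ for any $x\in S$; hence $S-S=S-x$ is a symmetric set containing $0$ that is closed under subtraction, i.e.\ a subgroup $H$ of index $3$, and $S=x+H$. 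This is the argument the paper runs (its Lemma 2.2 and Proposition 2.3), and some version of it has to be supplied before your forward direction is complete.
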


\begin{corollary}
The number of maximal sum-free sets in $\mathbb{Z}_3^n$ is $3^n-1$.
\end{corollary}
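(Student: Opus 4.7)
The plan is to derive the corollary as an immediate consequence of Theorem~\ref{thm1} together with the formula for the number of maximal subgroups of $\mathbb{Z}_p^n$ recorded earlier in the paper. Since $G = \mathbb{Z}_3^n$ is by hypothesis an elementary abelian $3$-group, Theorem~\ref{thm1} applies and tells us two things simultaneously: first, every maximal sum-free set of $G$ is a non-trivial coset of some maximal subgroup, and second, each maximal subgroup $M$ of $G$ contributes exactly two such maximal sum-free sets, namely its two non-trivial cosets (which exist because $[G:M]=3$).

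Thus the map sending a maximal sum-free set $S$ to the unique maximal subgroup $M$ with $S \subseteq G\setminus M$ is a surjective $2$-to-$1$ correspondence from the set of maximal sum-free sets onto the set of maximal subgroups of $G$. Consequently, the total count is
\[
\#\{\text{maximal sum-free sets in } \mathbb{Z}_3^n\} \;=\; 2 \cdot \#\{\text{maximal subgroups of } \mathbb{Z}_3^n\}.
\]

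Next I would substitute the formula recalled in the introduction: the number of maximal subgroups of $\mathbb{Z}_p^n$ is $\sum_{k=0}^{n-1} p^k$. Specialising to $p=3$ gives $\sum_{k=0}^{n-1} 3^k = \frac{3^n-1}{2}$, and multiplication by $2$ yields $3^n-1$, as required. There is essentially no obstacle here; the only small point worth checking explicitly is that the two non-trivial cosets of a given maximal subgroup are genuinely distinct as sets (which is clear) and that distinct maximal subgroups give rise to disjoint pairs of cosets (which follows from the fact that a non-trivial coset of $M$ determines $M$ as the unique maximal subgroup disjoint from it). With those brief remarks in place, the corollary follows in one line from Theorem~\ref{thm1}.
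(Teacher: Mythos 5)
Your proposal is correct and follows essentially the same route as the paper: both apply Theorem~\ref{thm1} to identify the maximal sum-free sets with the non-trivial cosets of maximal subgroups and then multiply the count $\frac{3^n-1}{2}$ of maximal subgroups by $2$. Your extra remarks about the $2$-to-$1$ correspondence being well defined are sound but not needed beyond what the paper states.
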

\begin{proof}
As the number of maximal subgroups of $\mathbb{Z}_3^n$ is $\frac{3^n-1}{2}$, it follows immediately from Theorem \ref{thm1} that the number of maximal sum-free sets in $\mathbb{Z}_3^n$ is $2(\frac{3^n-1}{2})=3^n-1$.
\end{proof}

\section{Main results}
\noindent Let $S$ be a sum-free set in a finite group $G$. We define $SS=\{xy \mid x,y \in S\}$,  $S^{-1}=\{x^{-1} \mid x\in S\}$ and $SS^{-1}=\{xy^{-1} \mid x,y \in S\}$. Clearly, $S\cap SS=\varnothing$. Moreover, $S\cap SS^{-1}=\varnothing$ as well; for if $x,y,z\in S$ with $x=yz^{-1}$, then $xz=y$, contradicting the fact that $S$ is sum-free.

\subsection{Correction to Theorem 1.1 of \cite{T2014}}
We begin with a remark that what is missing in the statement of Theorem 1.1 of \cite{T2014} is the assumption that $\Phi(G)=1$, where $\Phi(G)$ denotes the Frattini subgroup of $G$. A correction to Theorem 1.1 of \cite{T2014} is the following (from where the suggested corollary holds):

\begin{theorem}[The Correction]\label{T1}
A finite group $G$ is an elementary abelian $2$-group if and only if the set of maximal sum-free sets coincides with the set of complements of the maximal subgroups, and $\Phi(G)=1$. 
\end{theorem}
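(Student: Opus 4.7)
The plan is to prove both directions. For the forward implication I would take $G\cong \mathbb{Z}_2^n$. Since every element has order dividing $2$, the Frattini subgroup $\Phi(G)$ is trivial. Maximal subgroups of $G$ are exactly those of index $2$, and their complements are the non-trivial cosets $gM$; each such coset is sum-free because for $x,y\in gM$ one has $xy\in M$, which is disjoint from $gM$. To see these cosets are maximum among sum-free sets, I take any sum-free $S\subseteq G$ and fix $s_0\in S$; then $s_0 S\cap S=\varnothing$, since a coincidence $s_0 s=s'$ with $s,s'\in S$ would imply $s_0 s\in S$, contradicting sum-freeness. Hence $|S|\le |G|/2=2^{n-1}$, matching the size of each non-trivial coset of a maximal subgroup.

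The harder half of the forward direction is to show, conversely, that every maximum sum-free set $S$ arises this way. Equality in the previous bound forces $s_0 S=G\setminus S$ for each $s_0\in S$, so setting $H=G\setminus S$ I can write every $h\in H$ as $s_0 s$ for some $s\in S$. Exploiting the exponent-$2$ hypothesis, for $h_1=s_0 s_1$ and $h_2=s_0 s_2$ in $H$ one computes $h_1 h_2 = s_0^2\, s_1 s_2 = s_1 s_2 \in SS\subseteq G\setminus S=H$, where the containment uses sum-freeness of $S$. Together with $1=s_0^2\in H$ and the automatic closure under inverses, this exhibits $H$ as a subgroup of index $2$, so $S=s_0 H$ is the non-trivial coset of a maximal subgroup.

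For the converse, assume the stated set equality together with $\Phi(G)=1$. For any maximal subgroup $M$, the hypothesis forces $G\setminus M$ to be sum-free, so $(G\setminus M)(G\setminus M)\subseteq M$. Fixing any $g\notin M$, each $h\in G\setminus M$ satisfies $h\in g^{-1}M$, so $G\setminus M\subseteq g^{-1}M$, giving $|G|-|M|\le |M|$ and hence $[G:M]=2$. Thus every maximal subgroup is normal with quotient $\mathbb{Z}_2$, and since $\Phi(G)=\bigcap_M M=1$ the diagonal map $G\to \prod_M G/M\cong \mathbb{Z}_2^k$ is injective, embedding $G$ into an elementary abelian $2$-group. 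The main obstacle I anticipate is the extremal step in paragraph two, namely pinning down maximum sum-free sets in $\mathbb{Z}_2^n$ as non-trivial cosets of index-$2$ subgroups; the exponent-$2$ hypothesis, however, reduces this to the short subgroup check above, so the characterisation largely amounts to repackaging classical extremal sum-free bounds together with the Frattini-based recognition of elementary abelian groups.
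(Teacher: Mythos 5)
Your proof is correct, and its overall skeleton matches the paper's: non-trivial cosets of index-$2$ subgroups attain the extremal bound $|S|\le |G|/2$, and the complement of a maximum sum-free set is shown to be a subgroup via the closure computation $h_1h_2=s_1s_2\in SS\subseteq G\setminus S$. You differ from the paper in two sub-steps, both to your advantage in elementarity. First, to identify $G\setminus S$ with $s_0S$ the paper invokes its Remark 2.2(b), which rests on Lemma 3.1 of Giudici--Hart (the characterisation $G=T\cup TT\cup TT^{-1}\cup T^{-1}T\cup\sqrt{T}$ of locally maximal sum-free sets); you get $s_0S=G\setminus S$ directly from $|s_0S|=|S|=|G|/2$ together with $s_0S\cap S=\varnothing$, so no external lemma is needed. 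Second, in the converse the paper derives $[G:M]=2$ from the size bound on maximal sum-free sets and then argues that $G$ is a $2$-group via Sylow theory before applying the fact that $P/\Phi(P)$ is elementary abelian; you instead get $[G:M]=2$ from $(G\setminus M)(G\setminus M)\subseteq M$ and conclude by embedding $G$ into $\prod_M G/M\cong \mathbb{Z}_2^k$ using $\bigcap_M M=\Phi(G)=1$, which avoids both the Sylow argument and the Frattini-quotient fact. The only point worth making explicit is that your computation $h_1h_2=s_0^2s_1s_2$ uses commutativity of $\mathbb{Z}_2^n$, which is of course available in the forward direction.
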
 

\begin{remark}\label{R1}
(a) Let $G$ be a finite group and $S$ a sum-free set in $G$. For $x_1\in S$, define $x_1S:=\{x_1x_2|x_2 \in S\}$. As $|x_1S|=|S|$ and $S \cup x_1S \subseteq G$, with $S \cap x_1S=\varnothing$, we have that $2|S|\leq |G|$; so $|S|\leq \frac{|G|}{2}$. This shows that the size of a sum-free set in $G$ is at most $\frac{|G|}{2}$.\\
(b) We recall Lemma 3.1 of \cite{GH2009} which says that a sum-free set $T$ in a finite group $G$ is locally maximal if and only if $G=T \cup TT \cup TT^{-1} \cup T^{-1}T \cup \sqrt{T}$, where $\sqrt{T}=\{x\in G \mid x^2 \in T\}$. 
Now, let $S$ be a maximal sum-free set in $G=\mathbb{Z}_2^n$. As every maximal sum-free set is locally maximal and $SS=SS^{-1}=S^{-1}S$, with $\sqrt{S}=\varnothing$, Lemma 3.1 of \cite{GH2009} yields that $G=S\dot\cup SS$.
\end{remark}

We now give a proof of Theorem \ref{T1}

\begin{proof}
Let $G=\mathbb{Z}_2^n$, and $N$ be a maximal subgroup of $G$. Clearly, $|N|=\frac{|G|}{2}$. Let $M$ be the non-trivial coset of $N$ in $G$. Then $M$ is sum-free of size $\frac{|G|}{2}$ in $G$. By Remark \ref{R1}(a) therefore, $M$ is a maximal sum-free set in $G$. So each maximal subgroup of $G$ has its complement as a maximal sum-free set in $G$. 
Next, we show that every maximal sum-free set in $G$ is the complement of a maximal subgroup of $G$. Let $S$ be a maximal sum-free set in $G$, and let $x\in S$ be arbitrary. From $xS \subseteq SS$, we obtain that $|xS|\leq |SS|$, and from Remark \ref{R1}(b) that $G=S\dot\cup SS$ and the fact that $|S|=\frac{|G|}{2}$, we obtain that $|SS|\leq |G|-|S|=|S|=|xS|$.
Therefore $xS=SS$, and $G=S\dot\cup xS$. Define $H:=xS$. 
To show that $H$ is a subgroup of $G$, we simply show that $H$ is closed. 
Let $a$ and $b$ be elements of $H$. Then $a=xy$ and $b=xz$ for some $y,z\in S$. So $ab=yz\not\in S$. Hence $ab\in H$, and $H$ is closed. Thus $H$ is a subgroup of $G$. The fact that $H$ is a maximal subgroup of $G$ follows from the definition of $H$. Clearly, $S$ is the complement of $H$ in $G$ as desired.
The last part of the result that $\Phi(G)=1$ follows from the fact that the intersection of maximal subgroups of $G$ is trivial.
For the converse, suppose $G$ is a finite group such that the set of maximal sum-free sets in $G$ are precisely the complements of the maximal subgroups of $G$, and $\Phi(G)=1$. Remark \ref{R1}(a) tells us that any maximal sum-free set in $G$ has size at most $\frac{|G|}{2}$. Therefore the complement of the maximal subgroups must have size at most $\frac{|G|}{2}$, and hence every maximal subgroup is of index $2$ in $G$. Now, let $R$ be a Sylow $2$-subgroup of $G$. If $G$ is not a $2$-group, then $R$ is contained in a maximal subgroup of $G$ whose index must be odd; a contradiction. Therefore $G$ is a $2$-group. It is a basic result in group theory that for a $p$-group $P$, the quotient $P/\Phi(P)$ is always elementary abelian. As $\Phi(G)=1$, we conclude that $G$ is an elementary abelian $2$-group. 
\end{proof}

\subsection{Proof of Theorem \ref{thm1}}

\begin{lemma}\label{N1}
Let $S$ be sum-free in $G=\mathbb{Z}_3^n$ ($n\in \mathbb{N}$), and let $x\in S$. Then the following hold:\\
(i) any two sets in $\{S,x^{-1}S,xS\}$ are disjoint;
(ii) any two sets in $\{S,SS^{-1},S^{-1}\}$ are disjoint.\\
Moreover, if $S$ is maximal, then the following also hold:\\
(iii) $S \cup x^{-1}S \cup xS=G$ and $|S|=\frac{|G|}{3}$;
(iv) $S\cup SS^{-1} \cup S^{-1}=G$.
\end{lemma}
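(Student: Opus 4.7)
The plan is to prove parts (i)--(iv) in order, exploiting the exponent-3 identity $x^{-1}=x^2$ to convert inverses into products inside $S$, and then deducing (iii) and (iv) from cardinality arguments. For (i), I would check the three disjointness claims separately: $S\cap xS=\varnothing$ and $S\cap x^{-1}S=\varnothing$ follow directly from sum-freeness, since any equality $s_1=xs_2$ or $s_1=x^{-1}s_2$ rearranges to put a product of two elements of $S$ into $S$. The third pair $xS\cap x^{-1}S$ is where the exponent-3 hypothesis is genuinely used: an equality $xs_1=x^{-1}s_2$ yields $x^2s_1=s_2$, and since $x^2=x^{-1}$ in $\mathbb{Z}_3^n$ this rearranges to $xs_2=s_1\in S$, contradicting sum-freeness.

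For (ii), $S\cap SS^{-1}=\varnothing$ is already established at the beginning of Section 2, so I would handle the other two pairs. For $S\cap S^{-1}$: if $s_1=s_2^{-1}$ with $s_1,s_2\in S$, then exponent $3$ gives $s_1=s_2^2=s_2\cdot s_2\in S$, contradicting sum-freeness (the case $s_1=s_2$ is permitted in the definition). For $S^{-1}\cap SS^{-1}$: any equality $s_1^{-1}=s_2s_3^{-1}$ rearranges to $s_1s_2=s_3\in S$, giving the desired contradiction.

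For (iii), I would combine (i) with an existence result. Part (i) shows that $S,xS,x^{-1}S$ are pairwise disjoint subsets of $G$ each of size $|S|$, giving $3|S|\leq|G|$. To get equality when $S$ is maximal, I would exhibit a sum-free set of size $|G|/3$: any non-trivial coset $yH$ of a maximal subgroup $H$ of $\mathbb{Z}_3^n$ is sum-free, since $(yH)(yH)=y^2H\neq yH$ (because $y\notin H$) and distinct cosets are disjoint. Hence $|S|=|G|/3$, and the three disjoint pieces of size $|G|/3$ exhaust $G$.

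For (iv), I would run a cardinality squeeze. Part (ii) gives disjointness of $S,SS^{-1},S^{-1}$, which combined with $|S|=|S^{-1}|=|G|/3$ from (iii) forces $|SS^{-1}|\leq|G|/3$. Conversely, for any $x\in S$, $xS^{-1}\subseteq SS^{-1}$ and $|xS^{-1}|=|S|=|G|/3$, so $|SS^{-1}|\geq|G|/3$; equality then yields $S\cup SS^{-1}\cup S^{-1}=G$. The only real subtlety in the whole argument is the exponent-3 trick in (i) and (ii); once those are in place, (iii) and (iv) reduce to straightforward cardinality bookkeeping.
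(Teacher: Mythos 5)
Your proof is correct and takes essentially the same route as the paper: sum-freeness together with the exponent-$3$ identity $x^{-1}=x^2$ for the disjointness claims in (i) and (ii), the existence of sum-free non-trivial cosets of maximal subgroups to force $|S|=\frac{|G|}{3}$ in (iii), and a cardinality squeeze for (iv). The only difference is that you write out the details of (ii) and (iv), which the paper dispatches with ``the proof is similar''; your filled-in versions (including the lower bound $|SS^{-1}|\geq |xS^{-1}|=|S|$ needed for (iv)) are exactly the intended arguments.
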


\begin{proof}
(i)  As $S$ is sum-free, $S\cap xS=\varnothing=S\cap x^{-1}S$. So we only need to show that $xS \cap x^{-1}S = \varnothing$. Suppose for contradiction that $xS \cap x^{-1}S \neq \varnothing$. Then there exist $y,z \in S$ such that $xy=x^{-1}z$. This means that $y=xz$; a contradiction. Therefore $xS \cap x^{-1}S= \varnothing$. The proof of (ii) is similar to (i). For (iii), as $S \cup x^{-1}S \cup xS \subseteq G$, we have that $3|S|\leq |G|$; whence $|S|\leq \frac{|G|}{3}$. 
Each maximal subgroup of $G$ has size $\frac{|G|}{3}$. As any non-trivial coset of such a subgroup is sum-free and has size $\frac{|G|}{3}$; such a coset of the maximal subgroup must be maximal sum-free. Thus, $|S|=\frac{|G|}{3}$, and $S \cup x^{-1}S \cup xS=G$. The proof of (iv) is similar.
\end{proof}


\begin{proposition}\label{N2}
Suppose $S$ is a maximal sum-free set in an elementary abelian $3$-group $G$, and let $x\in S$. Then the following hold: (i) $x^{-1}S=S^{-1}S$; (ii) $xS=S^{-1}=SS$.
\end{proposition}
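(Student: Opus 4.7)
The plan is to prove (ii) first, splitting it into $xS = S^{-1}$ followed by $SS = xS$, and then to deduce (i) from abelianness.

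\emph{Step 1: $xS = S^{-1}$.} By Lemma~\ref{N1}(iii), $G = S \sqcup xS \sqcup x^{-1}S$ with each piece of size $|G|/3$, and by Lemma~\ref{N1}(ii), $S \cap S^{-1} = \varnothing$, so $S^{-1} \subseteq xS \cup x^{-1}S$. To rule out the second piece, I would note that if $a^{-1} = x^{-1}b$ for some $a, b \in S$, then $x = ab \in SS$, contradicting $x \in S$ together with $S \cap SS = \varnothing$. Hence $S^{-1} \subseteq xS$, and by matching sizes, $S^{-1} = xS$.

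\emph{Step 2: identifying $SS^{-1}$ and proving (i).} Lemma~\ref{N1} supplies two disjoint decompositions
\[
G = S \sqcup xS \sqcup x^{-1}S = S \sqcup SS^{-1} \sqcup S^{-1}.
\]
Substituting $xS = S^{-1}$ from Step~1 and comparing complements of $S \cup S^{-1}$ forces $SS^{-1} = x^{-1}S$. Since $G$ is abelian, $S^{-1}S = SS^{-1}$, so $S^{-1}S = x^{-1}S$, which is exactly (i).

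\emph{Step 3: $SS = xS$.} Combining Steps~1 and~2 with commutativity,
\[
x \cdot SS = (xS) \cdot S = S^{-1} S = SS^{-1} = x^{-1}S,
\]
so $SS = x^{-2} S$. Since $G$ has exponent three, $x^{-2} = x$, giving $SS = xS$. Together with Step~1, this yields $xS = S^{-1} = SS$, completing (ii).

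The whole argument is short once one pairs up the two decompositions of $G$ provided by Lemma~\ref{N1}. The main potential obstacle is the urge to attack $SS$ directly by chasing elements; the cleaner route is to identify $xS$ and $SS^{-1}$ first and then harvest $SS$ from a single computation that exploits both commutativity and the exponent-three condition $x^{-2} = x$.
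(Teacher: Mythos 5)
Your proof is correct, and it rests on the same engine as the paper's --- the two partitions $G = S \sqcup xS \sqcup x^{-1}S$ and $G = S \sqcup SS^{-1} \sqcup S^{-1}$ from Lemma~\ref{N1} --- but it traverses them in a different order and with different mechanisms at two points. The paper proves (i) first, by combining the inclusion $x^{-1}S \subseteq S^{-1}S$ with the cardinality bound $|S^{-1}S| = |SS^{-1}| \leq |G| - |S| - |S^{-1}| = |S|$; it then gets $xS = S^{-1}$ by chasing an element of $xS$ through both partitions, and finally obtains $SS = S^{-1}$ by writing $SS = \bigcup_{x\in S} xS$ and invoking $xS = S^{-1}$ for every $x \in S$. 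You instead establish $xS = S^{-1}$ first by directly excluding $S^{-1} \cap x^{-1}S$ (the relation $a^{-1} = x^{-1}b$ forces $x = ab \in SS$, contradicting sum-freeness), read off (i) by cancelling $S \sqcup S^{-1}$ from the two partitions, and then harvest $SS$ from the single identity $x \cdot SS = S^{-1}S = x^{-1}S$ together with $x^{3}=1$. That last step is the genuinely different ingredient: it replaces the union-over-all-$x$ argument by one algebraic computation that uses the exponent-three hypothesis explicitly, where the paper only uses that $x$ was arbitrary. Both routes are rigorous and of comparable length; yours is a little slicker at the end, while the paper's counting argument for (i) is the more standard template and recurs elsewhere in the section.
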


\begin{proof}
Let $S$ be a maximal sum-free set in an elementary abelian $3$-group $G$, and $x\in S$.\\
(i) Clearly, $x^{-1}S \subseteq S^{-1}S$; therefore $|x^{-1}S| \leq |S^{-1}S|$. By Lemma \ref{N1}(iv), $|S^{-1}S|\leq |G|-(|S|+|S^{-1}|)=3|S|-2|S|=|S|=|x^{-1}S|$. Therefore, $|x^{-1}S|=|S^{-1}S|$; whence $x^{-1}S=S^{-1}S$.\\
(ii) Let $y\in xS$. By Lemma \ref{N1}(i) and Proposition \ref{N2}(i), we have that $y\not\in (S\dot\cup SS^{-1})$. So Lemma \ref{N1}(iv) tells us that $y\in S^{-1}$, and we conclude that $xS\subseteq S^{-1}$. On the other hand, if $y\in S^{-1}$, then Lemma \ref{N1}(ii), Proposition \ref{N2}(i) and Lemma \ref{N1}(iii) yield $y\in xS$; so $S^{-1}\subseteq xS$. Therefore $xS=S^{-1}$. Now, 
\begin{equation}\label{E1}
SS=\bigcup_{x\in S}xS=\bigcup_{x\in S}S^{-1}=S^{-1}.
\end{equation}
Thus, $xS=S^{-1}=SS$ as required.
\end{proof}
 
\noindent Suppose $p$ is the smallest prime divisor of the order of a finite group $G$, and $H$ is a subgroup of index $p$ in $G$. Then $H$ is normal in $G$. This fact is well-known but we include a short proof for the reader's convenience.
Suppose for a contradiction that $H$ is not normal. 
Then for some $g\in G$, we have $H^g\neq H$. But $|H^gH|=\frac{|H^g||H|}{|H^g \cap H|}$ $=\frac{|H|^2}{|H^g \cap H|}$ $=|H|\frac{|H|}{|H ^g\cap H|}$ $\geq$ $|H|p=|G|$; thus $H^gH=G$. Therefore, $g=(gh_1g^{-1})h_2$ for some $h_1,h_2\in H$. So $g=h_2h_1\in H$, and we conclude that $H^g=H$; a contradiction. Therefore $H$ is normal in $G$.
\\

We now give a proof of Theorem \ref{thm1}
\begin{proof}
Let $G$ be an elementary abelian $3$-group of finite rank $n$. 
Clearly, every maximal subgroup of $G$ has size $3^{n-1}$; so is associated with two non-trivial cosets, which are maximal sum-free sets. Next, we show that every maximal sum-free set in $G$ is a non-trivial coset of a maximal subgroup of $G$. Suppose $S$ is a maximal sum-free set in $G$. Let $x\in S$ be arbitrary, and define $H:=x^{-1}S$. We show that $H$ is a subgroup of $G$. To do this, we show that $H$ is closed. 
Let $a$ and $b$ be elements of $H$. Then $a=x^{-1}y$ and $b=x^{-1}z$ for some $y,z\in S$. 
Since $ab=x^{-1}(x^{-1}yz)$, it is sufficient to show that $x^{-1}yz\in S$. Recall from Lemma \ref{N1}(iii) that $G=S\cup x^{-1}S\cup xS$. From Proposition \ref{N2}(ii) therefore, $G=S\cup x^{-1}S\cup S^{-1}$. 
Now, suppose $x^{-1}yz\in x^{-1}S$. Then there exists $q\in S$ such that $x^{-1}yz= x^{-1}q$. This implies that $yz=q$; a contradiction.
Next suppose $x^{-1}yz\in S^{-1}$. Then there exists $q\in S$ such that $x^{-1}yz= q^{-1}$. So $yz=xq^{-1}$, and we obtain that $x^{-1}q=y^{-1}z^{-1}=(yz)^{-1}$; a contradiction as $x^{-1}q\in x^{-1}S$, $(yz)^{-1}\in (SS)^{-1}=S$ by Equation \ref{E1}, and Lemma \ref{N1}(i) tells us that $x^{-1}S\cap S=\varnothing$. 
We have shown that $x^{-1}yz\not\in x^{-1}S\cup S^{-1}$. In the light of $G=S\cup x^{-1}S\cup S^{-1}$ therefore, $x^{-1}yz\in S$; whence, $H$ is closed. So $H$ is a subgroup of $G$.
As $|H|=|x^{-1}S|=|S|=\frac{|G|}{3}$, we conclude that $H$ is a maximal subgroup of $G$, and $S=xH$ is a non-trivial coset of $H$ in $G$. So we have shown now that every maximal sum-free set in $G$ is a non-trivial coset of a maximal subgroup of $G$. The third part that $\Phi(G)=1$ follows from the fact that the intersection of maximal subgroups of $G$ is trivial.
Conversely, suppose $G$ is a finite group such that the set of non-trivial cosets of each maximal subgroup of $G$ coincides with two maximal sum-free sets in $G$, every maximal sum-free set of $G$ is a coset of a maximal subgroup of $G$, and $\Phi(G)=1$.
First and foremost, $G$ has no subgroup of index $2$; otherwise it will have a maximal sum-free set which is not a coset of a subgroup of index $3$.
As the smallest index of a maximal subgroup of $G$ is $3$, any such subgroup must be normal in $G$.
Let $H$ be a Sylow $3$-subgroup of $G$. Then either $H=G$ or $H$ is contained in a maximal subgroup (say $M$) of $G$. Suppose $H$ is contained in such maximal subgroup $M$. As $|G/M|=3$, we deduce immediately that $|G:H|$ is divisible by $3$; a contradiction! Therefore, $H=G$, and we conclude that $G$ is a $3$-group. Now, $G$ is an elementary abelian $3$-group follows from the fact that $\Phi(G)=1$ and $P/\Phi(P)$ is elementary abelian for every $p$-group $P$.
\end{proof}

\noindent In conclusion, if $G=\mathbb{Z}_p^n$ for prime $p>3$ and $n\in \mathbb{N}$, then there exists a normal subgroup $N$ of $G$ such that $G/N\cong C_p$, and $C_p$ has a maximal sum-free set of size at least $2$ (the latter fact follows from the classification of groups containing maximal by inclusion sum-free sets of size $1$ in \cite[Theorem 4.1]{GH2009}). 
The union of non-trivial cosets of $N$ corresponding to this maximal sum-free set of $C_p$ is itself sum-free in $G$.
So $G$ has a maximal sum-free set of size at least $2|N|$. This argument shows that a direct analogue of Theorem \ref{thm1} is not possible for elementary abelian $p$-groups, where $p>3$ and prime.


\bigskip
\noindent \textbf{Chimere Stanley Anabanti}\\
Birkbeck, University of London\\
c.anabanti@mail.bbk.ac.uk
\end{document}